\newtheorem{thm}{Theorem}
\newtheorem{dfn}[thm]{Definition}
\newtheorem{rem}[thm]{Remark}
\newtheorem{rems}[thm]{Remarks}
\newtheorem{exa}[thm]{Example}
\newtheorem{exas}[thm]{Examples}
\newtheorem{prop}[thm]{Proposition}
\newcommand{\Pb}{\mathop{\text{\bf P}}\nolimits} 
\newcommand{\Gl}{\mathop{\text{\rm GL}}\nolimits}
\newcommand{\calR}{K[X_1,\ldots,X_n]}
\newcommand{\calD}{K\left[\frac{\partial }{\partial X_1},\ldots,\frac{\partial}{\partial X_n}\right]}
\title{Computing invariants of cubic surfaces}
\keywords{Invariants of cubic surfaces, Clebsch-Salmon Invariants, Clesch transfer, Transvection}
\author{Andreas-Stephan Elsenhans}
\address{Andreas-Stephan Elsenhans\\
Institut f\"ur Mathematik\\
Universit\"at W\"urzburg}
\email{stephan.elsenhans@mathematik.uni-wuerzburg.de}
\author{J\"org Jahnel}
\address{J\"org Jahnel\\ Department Mathematik\\ Universit\"at Siegen}
\email{jahnel@mathematik.uni-siegen.de}
\begin{document}

\maketitle

\begin{abstract}
We report on the computation of invariants,
covariants, and contravariants of cubic surfaces.
All algorithms are implemented in the computer
algebra system {\tt magma}.
\end{abstract}

\section{Introduction}

Given two hypersurfaces of the same degree in projective space over an algebraically closed 
field, one may ask for the existence of an automorphism of the projective space that maps one 
of the hypersurfaces to the other.
It turns out that if the hypersurfaces are stable~\cite[Def.~1.7]{MFK} in the sense of geometric invariant 
theory such an isomorphism exists 
if and only if all the invariants of the hypersurfaces coincide~\cite{Mu}.

Aside from cubic curves in $\Pb^2$ and quartic surfaces in $\Pb^3$, an isomorphism between 
smooth hypersurfaces of degree $d \geq 3$ always extends to an automorphism of the 
ambient projective space~\cite[Th.~2]{MM}.
Thus, the invariants may be used to test abstract isomorphy.

If the base field is not algebraically closed, two varieties
with equal invariants can differ by a twist.
A necessary condition for the existence of a non-trivial twist
is that the variety has a non-trivial automorphism.

In this article, we focus on the case of cubic surfaces. 
For them, it was proven by Clebsch~\cite{Cl} that the ring 
of invariants of even weight is generated by five invariants of 
degrees 8, 16, 24, 32, and 40. Later, Salmon~\cite{Sa3} 
worked out explicit formulas for these invariants based on the 
pentahedral representation of the cubic surface, 
introduced by Sylvester~\cite{Sy}. Using modern computer algebra, it is possible
to compute the pentahedral representation of a given
cubic surface and to deduce the invariants from this~\cite{EJ1}.

We describe a different approach to compute the Clebsch-Salmon
invariants, linear covariants, and some contravariants of cubic 
surfaces,  based on the Clebsch transfer principle. Using this, we also 
compute an  invariant of degree 100~\cite[Sec.~9.4.5]{Do} and odd weight that vanishes
if and only if the cubic surface has a non-trivial automorphism.
The square of this invariant is a polynomial expression in Clebsch's
invariants.

This answers the question of isomorphy for all
stable cubic surfaces over algebraically closed fields 
and for all surfaces over non-closed fields, for which the 
degree 100 invariant does not vanish.  

All algorithms are implemented in the computer algebra
system {\tt magma}~\cite{BCP}.

\section{The Clebsch-Salmon invariants}

\begin{dfn} \label{def_inv}
Let $K$ be a field of characteristic zero and $K[X_1,\ldots,X_n]^{(d)}$ the $K$-vector space of all 
homogeneous forms of degree $d$. 
Further, we fix the left group action 
\begin{eqnarray*}
\Gl_n(K) \times K[X_1,\ldots,X_n]  \rightarrow K[X_1,\ldots,X_n],\quad (M,f) \mapsto M \cdot f,
\end{eqnarray*}
with $(M \cdot f)(X_1,\ldots,X_n) := f((X_1,\ldots,X_n) \, M)$.
Finally, on the polynomial ring $K[Y_1,\ldots,Y_n]$, we choose the action
\begin{eqnarray*}
\Gl_n(K) \times K[Y_1,\ldots,Y_n]  \rightarrow K[Y_1,\ldots,Y_n], \quad
(M,f) \mapsto M \cdot f,
\end{eqnarray*}
given by $(M \cdot f)(Y_1,\ldots,Y_n) := f((Y_1,\ldots,Y_n) \left(M^{-1}\right)^\top)$.
\begin{enumerate}
\item
An {\em invariant $I$ of degree $D$ and weight $w$} is a map $K[X_1,\ldots,X_n]^{(d)} \rightarrow K$ that may be
given by a homogeneous polynomial of degree $D$ in the coefficients of $f$ and  satisfies
$$
I(M \cdot f) =  \det(M)^w \cdot I(f), 
$$ 
for all $M \in \Gl_n(K)$  and all forms $f \in K[X_1,\ldots,X_n]^{(d)}$.
\item
A {\em covariant $C$ of degree $D$, order $p$, and weight $w$} is a map
$$
K[X_1,\ldots,X_n]^{(d)} \rightarrow K[X_1,\ldots,X_n]^{(p)}
$$
such that each coefficient of $C(f)$ is a homogeneous degree $D$ polynomial 
in the coefficients of $f$ and that satisfies
$$
C(M \cdot f) =  \det(M)^w \cdot M \cdot (C(f)),
$$ 
for all $M \in \Gl_n(K)$ and all forms $f \in K[X_1,\ldots,X_n]^{(d)}$.
\item
A {\em contravariant $c$ of degree $D$, order $p$, and weight $w$} is a map 
$$
K[X_1,\ldots,X_n]^{(d)} \rightarrow K[Y_1,\ldots,Y_n]^{(p)}
$$
such that each coefficient of $c(f)$ is a homogeneous degree $D$ polynomial 
in the coefficients of $f$ and that satisfies
$$
c(M \cdot f) =  \det(M)^w \cdot M \cdot c(f),
$$ 
for all $M \in \Gl_n(K)$ and all forms $f \in K[X_1,\ldots,X_n]^{(d)}$. Note that the right hand
side uses the action on $K[Y_1,\ldots,Y_n]$.
\end{enumerate}
\end{dfn}

\begin{rems}
\begin{enumerate}
\item
The set of all invariants is a commutative ring and an algebra over the base field.
\item
The set of all covariants (resp. contravariants) is a commutative ring and a module over the ring of invariants.
\item
Geometrically, the vanishing locus of $f$ or a  covariant $C(f)$ is a subset of the projective space whereas
the vanishing locus of a contravariant $c(f)$ is a subset of the dual projective space. Replacing the matrix by
the transpose inverse matrix gives the action on the dual space in a naive way.
\end{enumerate}
\end{rems}

\begin{exas}
\begin{enumerate}
\item
The discriminant of binary forms of degree $d$ is an invariant of degree $2d - 2$ and weight 
$d(d-1)$~\cite[Chap.~2]{Ol}.
\item
Let $f$ be a form  of degree $d > 2$ in $n$ variables. Then the {\it Hessian}~$H$
defined by
$$
H(f) := \det \left(\frac{\partial^2 f}{\partial X_i \, \partial X_j} \right)_{i,j =1,\ldots,n}
$$
is a covariant of degree $n$, order $(d-2) n$, and weight $2$.
\item
Let a smooth plane curve $V \subset \Pb^2$ be given by a ternary form $f$ of degree $d$. 
Mapping $f$ to the form that defines the dual curve~\cite[Sec.~1.2.2]{Do} of $V$ is an example of a contravariant 
of degree $2d - 2$ and order $d(d-1)$.
\end{enumerate}
\end{exas}

\subsection*{Salmon's formulas}
A cubic surface given by a system of equations of the shape
$$
a_0 X_0^3 + a_1 X_1^3 +a_2 X_2^3 +a_3 X_3^3 +a_4 X_4^3 = 0 
, \quad X_0 + X_1 + X_2 + X_3 + X_4 = 0
$$
is said to be in {\it pentahedral form}. The coefficients $a_0,\ldots,a_4$ are called the
pentahedral coefficients of the surface. The cubic surfaces that have a pentahedral form
are a Zariski open subset in the Hilbert scheme of all cubic surfaces. Thus, it suffices to give
the invariants for these surfaces.
For this, we denote by $\sigma_1,\ldots,\sigma_5$ the elementary symmetric functions 
of the pentahedral coefficients. Then the Clebsch-Salmon invariants 
(as mentioned in the introduction) 
of the cubic surface are given by~\cite[\S~467]{Sa3},
\begin{eqnarray*}
I_8 = \sigma_4^2 - 4 \sigma_3 \sigma_5, \quad
I_{16} = \sigma_1 \sigma_5^3, \quad
I_{24} = \sigma_4 \sigma_5^4, \quad
I_{32} = \sigma_2 \sigma_5^6, \quad
I_{40} = \sigma_5^8\, .
\end{eqnarray*}
Further, Salmon lists four linear covariants of degrees 11, 19, 27, and 43~\cite[\S~468]{Sa3} 
\begin{align*}
L_{11} &= \sigma_5^2 \sum_{i=0}^4 a_i x_i, &
L_{19} &=  \sigma_5^4 \sum_{i=0}^4 \frac{1}{a_i} x_i, \\
L_{27} &=  \sigma_5^5 \sum_{i=0}^4 a_i^2 x_i, &  
L_{43} &=  \sigma_5^8 \sum_{i=0}^4 a_i^3 x_i \,.
\end{align*}
Finally, the $4 \times 4$-determinant of the matrix formed by the coefficients of these 
linear covariants of a cubic surface in $\Pb^3$ is an invariant $I_{100}$ of degree 100.
It vanishes if and only if the surface has Eckardt points or equivalently a non-trivial 
automorphism group~\cite[Sec.~9.4.5, Table~9.6]{Do}. The square of $I_{100}$ can be expressed in terms of the other invariants above.
For a modern view on these invariants, we refer to~\cite[Sec.~9.4.5]{Do}.


\section{Transvection}
One classical approach to write down invariants is to 
use the transvection (called \"Uberschiebung in German). 
This is part of the so called symbolic method~\cite[Chap.~8,~\S2]{We},~\cite[App.~B.2]{Hu}.
We illustrate it in the case of ternary forms.

\begin{dfn}
Let $K[X_1,\ldots,X_n,Y_1,\ldots,Y_n,Z_1,\ldots,Z_n]$ be the polynomial ring in $3 n$
variables. For $i,j,k \in \{1,\ldots,n\}$, we denote by $(i\, j\, k)$ the differential operator 
\begin{eqnarray*}
(i\, j\, k) := 
\det
\left(
\begin{array}{ccc}
\frac{\partial}{\partial X_i} &  
\frac{\partial}{\partial X_j} &  
\frac{\partial}{\partial X_k} \\
\frac{\partial}{\partial Y_i} &  
\frac{\partial}{\partial Y_j} &  
\frac{\partial}{\partial Y_k} \\
\frac{\partial}{\partial Z_i} &  
\frac{\partial}{\partial Z_j} &  
\frac{\partial}{\partial Z_k} \\
\end{array}
\right) 
\, .
\end{eqnarray*}
\end{dfn}

\begin{exa} 
Using this notation, the {\em Aronhold invariants} $S$ and $T$
of the ternary cubic form $f$ are given by
\begin{align*}
S(f) &:=
(1\, 2 \, 3) (2 \, 3 \, 4) (3 \, 4 \, 1) (4 \, 1 \, 2) 
f(X_1,Y_1,Z_1) \cdots f(X_4,Y_4,Z_4), \\
T(f) &:= 
(1 \, 2 \, 3) (1 \, 2 \, 4) (2 \, 3 \, 5) (3 \, 1 \, 6) 
(4 \, 5 \, 6)^2 f(X_1,Y_1,Z_1) \cdots f(X_6,Y_6,Z_6)\, .
\end{align*} 
The first one is of degree and weight~$4$, the second one is of degree and weight~$6$.
Using $S$ and $T$, one can write down the discriminant of a 
ternary cubic as $\Delta := S^3 - 6 T^2$. The discriminant vanishes if 
and only if the corresponding cubic curve is singular.

See~\cite[Sec.~V]{Sa2} for a historical and~\cite[Sec.~3.4.1]{Do} 
for modern references concerning invariants of ternary
cubic forms. 
\end{exa}

\begin{rem}
One can use the transvection to write down invariants of quaternary forms, as well.
For example, if $f$ is a quartic form in four variables then
$$
(1\, 2\, 3\, 4)^4 f(X_1,Y_1,Z_1,W_1) \cdots f(X_4,Y_4,Z_4,W_4)
$$ 
is an invariant of degree 4. Here, $(1\, 2\, 3 \, 4)$ denotes the  
differential operator 
$$
(1\, 2\, 3 \, 4) := 
\det
\left(
\begin{array}{cccc}
\frac{\partial}{\partial X_1} &  
\frac{\partial}{\partial X_2} &  
\frac{\partial}{\partial X_3} &
\frac{\partial}{\partial X_4} \\
\frac{\partial}{\partial Y_1} &  
\frac{\partial}{\partial Y_2} &  
\frac{\partial}{\partial Y_3} &
\frac{\partial}{\partial Y_4} \\
\frac{\partial}{\partial Z_1} &  
\frac{\partial}{\partial Z_2} &  
\frac{\partial}{\partial Z_3} &
\frac{\partial}{\partial Z_4} \\
\frac{\partial}{\partial W_1} &  
\frac{\partial}{\partial W_2} &  
\frac{\partial}{\partial W_3} &
\frac{\partial}{\partial W_4} 
\end{array}
\right)\, .
$$
For a quaternary cubic form, one can apply this 
to its Hessian to get an invariant of degree 16.
However, a direct evaluation of such formulas for
forms in four variables is too slow in practice. The reason is that  both the 
differential operators and the product 
$f(X_1,Y_1,Z_1,W_1) \cdots f(X_4,Y_4,Z_4,W_4)$ 
usually have many terms.
\end{rem}

\section{The Clebsch transfer principle}
We refer to~\cite[Sec.~3.4.2]{Do} for a detailed and modern description of
the Clebsch transfer principle. The basic idea is to
compute a contravariant of a form of degree $d$ in $n$
variables out of an invariant of a form of degree $d$ 
in $(n-1)$ variables. 

\begin{dfn} 
\begin{enumerate}
\item
We consider the vector space $V = K^n$ and choose the volume form given by the
determinant. We have the following isomorphism
$$
\Phi \colon \Lambda^{n-1} V \rightarrow V^*,\quad 
v_1 \wedge \dots \wedge v_{n-1} \mapsto 
(v \mapsto \det (v,v_1,\ldots,v_{n-1}))\,.
$$
\item
Let $I$ be a degree $D$, weight $w$ invariant on $K[U_1,\ldots,U_{n-1}]^{(d)}$. 
Then the {\it Clebsch transfer} of $I$ is the 
contravariant $\tilde{I}$ of degree $D$ and order $w$ 
$$
\tilde{I} \colon K[X_1,\ldots,X_{n}]^{(d)} \rightarrow K[Y_1,\ldots,Y_n]^{(w)},
$$ 
given by
$$
\tilde{I}(f) \colon (K^n)^* \rightarrow K,\quad 
l \mapsto I(f(U_1 v_1 + \cdots + U_{n-1} v_{n-1}))\, .
$$ 
Here, $v_1,\ldots,v_{n-1}$ are given by
$v_1 \wedge \ldots \wedge v_{n-1} = \Phi^{-1}(l)$.
Note that $\tilde{I}(f)$, as defined, is indeed a polynomial mapping
and homogeneous of degree~$w$.
\end{enumerate}
\end{dfn}

\begin{exa}
Denote by $S$ and $T$ the invariants of ternary cubic forms,
introduced above.  
Then $\tilde{S}$ is a degree 4, order 4 contravariant of quaternary cubic forms. 
Further, $\tilde{T}$ is a contravariant of degree 6 and order 6.

The discriminant of a cubic curve is given by $\Delta = S^3 - 6 T^2$. It vanishes if and 
only if the cubic curve is singular.
Thus, the dual surface of the smooth cubic surface $V(f)$
is given by $\tilde{\Delta}(f) = \tilde{S}(f)^3 - 6 \tilde{T}(f)^2 = 0$.
\end{exa}

\begin{rem}
By definition, the dual surface of a smooth surface $V(f) \subset \Pb^3$ is the set of all 
tangent hyperplanes of $V(f)$. A plane $P \in (\Pb^3)^*$ is tangent if and only it the 
intersection $V(f) \cap P$ is singular. Thus, $P$ is a point on the dual surface if and only if
$\tilde{\Delta}(f)(P) = 0$. Here, $\Delta$ is the discriminant of ternary forms of the same
degree as $f$.
\end{rem}

\begin{rem}
For a given cubic form $f \in K[X,Y,Z,W]$, we compute $\tilde{S}(f)$ 
by interpolation as follows:
\begin{enumerate}
\item
Choose 35 vectors $p_1,\ldots,p_{35} \in \left(K^4\right)^*$ in general position.
\item
Compute $\Phi^{-1}(p_i)$, for $i = 1,\ldots,35$.
\item
Compute $s_i := S(f(U_1  v_1 + U_2 v_2 + U_3 v_3))$, for $v_1 \wedge v_2 \wedge v_3 =  \Phi^{-1}(p_i)$
and all $i = 1,\ldots,35$.
\item
Compute the degree $4$ form $\tilde{S}(f)$ by interpolating the arguments $p_i$ and the values $s_i$. 
\end{enumerate}
We can compute $\tilde{T}(f)$ in the same way. The only modification necessary is to
increase the number of vectors, as the space of sextic forms is of dimension 84.
\end{rem}

\section{Action of contravariants on covariants and vice versa}

\begin{enumerate}
\item
Recall that the rings $K[X_1,\ldots, X_n]$ and $K[Y_1,\ldots, Y_n]$ are equipped with
 $\Gl_n(K)$-actions, as introduced in Definition~\ref{def_inv}.
\item
The ring of differential operators
$$K\left[\frac{\partial }{\partial X_1},\ldots,\frac{\partial }{\partial X_n}\right]$$ 
acts on $\calR$.
\item
The $\Gl_n(K)$-action on $\calD$ given by 
$$
M \cdot \left(\frac{\partial }{\partial v} \right) := 
\frac{\partial }{\partial (v \cdot M^{-1})} \mbox{ for all } v \in K^n
$$
results in the equality
$$
M \cdot \left(\frac{\partial f}{\partial v}\right) = 
\left( M \cdot \frac{\partial}{\partial v}  \right) \left(M \cdot f \right),
$$
for all $f \in K[X_1,\ldots,X_n]$ and all $v \in K^n$.
\item
The map 
$$
\psi 
\colon K[Y_1,\ldots,Y_n] \rightarrow 
K\left[\frac{\partial }{\partial X_1},\ldots,\frac{\partial }{\partial X_n}\right], \quad
Y_i \mapsto \frac{\partial }{\partial X_i}
$$
is an isomorphism of rings. Further, for each $M \in \Gl_n(K)$,
we have the following commutative diagram
\begin{eqnarray*}
\diagram
K[Y_1,\ldots,Y_n] \rrto^{\psi~~~} \dto_M & & 
\calD 
 \dto^{M} \\
K[Y_1,\ldots,Y_n] \rrto^{\psi~~~}             & & 
\calD 
\enddiagram_{\displaystyle .}
\end{eqnarray*}
In other words, $\psi$ is an isomorphism of $\Gl_n(K)$-modules.
\item
Let $C$ be a covariant and $c$ a contravariant on $K[X_1,\ldots,X_n]^{(d)}$. 
Denote the order of $C$ by $P$ and the order of $c$ by $p$.
For $P \geq p$, we define 
\begin{eqnarray*}
&c \vdash C \colon  K[X_1,\ldots,X_n]^{(d)} \rightarrow K[X_1,\ldots,X_n]^{(P-p)},\quad
f \mapsto   \psi(c(f)) \left(C(f)\right)\, .
\end{eqnarray*} 
The notation $\vdash$ follows~\cite[p.~304]{Hu}.
\item
Assume $c \vdash C$ not to be zero.
If $p < P$ then $c \vdash C$ is a covariant of order $P - p$. 
If $p = P$ then $c \vdash C$ is an invariant.
In both cases, the degree of  $c \vdash C$ is the sum of the degrees of $c$ and $C$. 
\item
Similarly to $\psi$, one can introduce a map 
$$\widehat{\psi} \colon K[X_1,\ldots,X_n] \rightarrow
K\left[\frac{\partial }{\partial Y_1},\ldots,\frac{\partial }{\partial Y_n} \right],\quad
X_i \mapsto \frac{\partial }{\partial Y_i}\,.
$$
As above, $\widehat{\psi}$ is an isomorphism of rings and $\Gl_n(K)$-modules. 
Let $C$ a covariant and  $c$ a contravariant 
 on $K[X_1,\ldots,X_n]^{(d)}$.
We  define $C \vdash c$ by
$$
(C \vdash c)(f) := \widehat{\psi}(C(f)) \left(c(f)\right)\, .
$$ 
\item
Assume $c \vdash C$ not to be zero.
If $p > P$ then $C \vdash c$ is a contravariant of order $p - P$. 
If $p = P$ then $C \vdash c$ is an invariant.
In both cases, the degree of  $C \vdash c$ is the sum of the degrees of $C$ and $c$. 
\end{enumerate}

\section{Explicit invariants of cubic surfaces}

\begin{rems}
\begin{enumerate}
\item
It is well known that the ring of invariants of quaternary
cubic forms is generated by the six  invariants of degrees
8, 16, 24, 32, 40, and 100~\cite[Sec.~9.4.5]{Do}. 
The first five generators are primary invariants~\cite[Def.~2.4.6]{DK}.
Thus, the vector spaces of all 
invariants of degrees 8, 16, 24, 32 and 40  are of 
dimensions 1, 2, 3, 5, and 7.   
In general, these dimensions are encoded in the 
Molien series, which can be computed efficiently using 
character theory~\cite[Ch.~4.6]{DK}.
\item
In the lucky case that one is able to write down a basis
of the vector space of all invariants of a given degree $d$,
one can find an expression of a given invariant of degree $d$
by linear algebra. This requires that the invariant is known
for sufficiently many surfaces. 
For cubic surfaces, this is provided by the pentahedral equation.
\item
Applying the methods above, we can write down many invariants for quaternary cubic forms.
We start with the form $f$, its Hessian covariant $H(f)$, and the contravariant $\tilde{S}(f)$.
Then we apply known covariants to contravariants and vice versa. Further, one can
multiply two covariants or contravariants to get a new one. 
For efficiency, it is useful to keep the orders of the covariants and contravariants as small
as possible. This way, they will not consist of too many terms.
\end{enumerate}
\end{rems}

\begin{prop}
Let $f$ be a quarternary cubic form. With
\begin{align*}
C_{4,0,4} &:= \tilde{S}(f), 
& C_{4,4} &:= H(f), \\
C_{6,2} &:= C_{4,0,4} \vdash f^2, 
& C_{9,3} &:=  C_{4,0,4}  \vdash (f \cdot C_{4,4}), \\
C_{10,0,2} &:= C_{6,2} \vdash C_{4,0,4}, 
& C_{11,1a} &:= C_{10,0,2} \vdash f, \\
C_{13,0,1} &:= C_{9,3} \vdash C_{4,0,4}, 
& C_{14,2} &:= C_{10,0,2} \vdash  C_{4,4}, \\ 
C_{14,2a} &:=  C_{13,0,1} \vdash f, 
& C_{19,1a} &:= C_{13,0,1} \vdash C_{6,2}, 
\end{align*}
the following expressions
\begin{align*}
I_8 &:= \frac{1}{2^{11} \cdot 3^9}  C_{4,0,4} \vdash C_{4,4},\\
I_{16} &:= \frac{1}{2^{30} \cdot 3^{22}} C_{6,2} \vdash C_{10,0,2}, \\
I_{24} &:= \frac{1}{2^{41} \cdot 3^{33}} C_{10,0,2} \vdash C_{14,2}, \\
I_{32a} &:= C_{10,0,2} \vdash C_{11,1a}^2, \\     
I_{32} &:= \frac{2}{5}(I_{16}^2 - \frac{1}{2^{60} \cdot 3^{44}} \cdot  I_{32a}), \\
I_{40a} &:= C_{4,0,4} \vdash (C_{11,1a}^2 \cdot C_{14,2}), \\
I_{40} &:= \frac{-1}{100} \cdot I_8 \cdot I_{32} - \frac{1}{50} \cdot I_{16} \cdot I_{24} 
        - \frac{1}{2^{72} \cdot 3^{53} \cdot 5^2} I_{40a},
\end{align*}
give the  Clebsch-Salmon invariants $I_8,\ I_{16},\ I_{24},\ I_{32},$ and  $I_{40}$.
Further, with
\begin{align*}
 C_{11,1} :=&  \frac{1}{2^{20} 3^{15}} C_{11,1a}, \\
 C_{19,1} :=& \frac{1}{2^{33} \cdot 3^{24} \cdot 5} (C_{19,1a} + 2^{32} \cdot 3^{24} \cdot I_8 \cdot C_{11,1a}),  \\ 
 C_{27,1a} :=& \frac{1}{2^{42} 3^{33}} C_{13,0,1} \vdash C_{14,2a}, \\ 
 C_{27,1} :=& I_{16} \cdot C_{11,1} + \frac{1}{200}(C_{27,1a} - 2 \cdot I_8^2 \cdot C_{11,1} - 10 \cdot I_8 \cdot C_{19,1}), \\
 C_{43,1a} :=& \frac{1}{2^{68} \cdot 3^{53}} C_{13,0,1} \vdash ( C_{13,0,1} \vdash (C_{13,0,1} \vdash C_{4,4})), \\
 C_{43,1} :=& \frac{-1}{1000}  C_{43,1a} - \frac{1}{200} \cdot I_8^2 \cdot C_{27,1} + I_{16} \cdot C_{27,1} \\
           & + \frac{1}{1000} \cdot I_8^3 \cdot C_{19,1} -\frac{1}{10} \cdot I_8 \cdot I_{16} \cdot C_{19,1} - I_{24} \cdot C_{19,1} \\
          & + \frac{1}{200} \cdot I_8^2 \cdot I_{16}  \cdot C_{11,1} + \frac{3}{20}  \cdot I_8 \cdot I_{24} \cdot C_{11,1},
\end{align*}
$C_{11,1},\ C_{19,1},\ C_{27,1},$ and $C_{43,1}$ are Salmon's linear covariants.
Here, we use the first index to indicate the degree of an invariant, covariant, or contravariant. The second 
index is the order of a covariant, whereas the third index is the order of a contravariant.
Finally, we can compute $I_{100}$ as the determinant of the 4 linear covariants.
\end{prop}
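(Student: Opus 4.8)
The plan is to reduce every assertion to a polynomial identity in the pentahedral coefficients and to verify it on that family. This works because every expression in the statement is built from $f$, its Hessian, and the Clebsch transfer $\tilde S$ by operations that preserve the structure of an invariant, covariant, or contravariant, and because the cubic surfaces in pentahedral form are Zariski dense among all cubic surfaces (so, by $\Gl_4(K)$-equivariance, the pentahedral values determine the objects everywhere).

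\emph{The types.} With $d=3$ and $n=4$, the Hessian example of Section~2 shows that $C_{4,4}=H(f)$ is a covariant of degree $4$, order $4$, weight $2$, and by the definition of the Clebsch transfer, $C_{4,0,4}=\tilde S(f)$ — the transfer of the degree-$4$ weight-$4$ Aronhold invariant $S$ — is a contravariant of degree $4$ and order $4$. Iterating the rules of Section~5 (under $c\vdash C$ and $C\vdash c$ the degrees add and the orders subtract, equal orders producing an invariant; under a product of two covariants or two contravariants the degrees and orders add; weights add throughout), an induction down the list shows that each $C_{\bullet}$ is a covariant or contravariant of exactly the degree and order read off from its subscripts, and that $I_8,\dots,I_{40}$ (along with $I_{32a}$ and $I_{40a}$) are invariants of the indicated degrees, a linear combination of invariants of one degree being again such an invariant. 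For $I_{100}$, the four order-$1$ covariants $C_{11,1},C_{19,1},C_{27,1},C_{43,1}$ have coefficient vectors that, under $M\in\Gl_4(K)$, get multiplied by $M$ and by $\det(M)^{w_i}$; hence the determinant of the $4\times4$ coefficient matrix picks up one further power of $\det(M)$ and is an invariant of degree $11+19+27+43=100$. Finally, for quaternary cubic forms the scaling $M=\lambda\,\mathrm{Id}$ forces the weight of an invariant of degree $D$ to be $3D/4$, and that of a covariant of degree $D$ and order $p$ to be $(3D-p)/4$; so each constructed object automatically has the same weight as the Clebsch–Salmon object it is supposed to equal.

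\emph{Reduction and verification.} All objects above are polynomial in the coefficients of $f$ (the $C_{\bullet}$ having such polynomials as coefficients). Every cubic surface admitting a pentahedral form is, after a linear change of coordinates on $\Pb^3$, of the shape $f_{\mathbf a}=0$ with
\[
f_{\mathbf a}:=a_0X_0^3+a_1X_1^3+a_2X_2^3+a_3X_3^3-a_4(X_0+X_1+X_2+X_3)^3 ,
\]
obtained by eliminating $X_4$ from Sylvester's pentahedral form, a quaternary cubic with coefficients in $\mathbb Q[a_0,\dots,a_4]$, and such surfaces are dense. If two invariants of equal degree and weight agree on the family $\{f_{\mathbf a}\}$, then by their transformation law they agree on the (dense) $\Gl_4(K)$-orbit of this family, hence everywhere; the same holds for covariants. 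So it suffices to verify each claimed identity for $f=f_{\mathbf a}$, as an identity in $\mathbb Q[a_0,\dots,a_4]$. Concretely, I would compute $H(f_{\mathbf a})$ as the determinant of the matrix of second partials and $\tilde S(f_{\mathbf a})$ by the thirty-five-point interpolation of Section~4 (a quartic form in four variables having thirty-five coefficients), then run through the list, applying $\psi$ and $\widehat\psi$ and differentiating to evaluate each $\vdash$, and forming the stated products and linear combinations. Since $\{f_{\mathbf a}\}$ is $S_5$-symmetric and the Clebsch–Salmon data are symmetric in the $a_i$, the results can be rewritten in the elementary symmetric functions $\sigma_1,\dots,\sigma_5$, after which one checks
\[
I_8=\sigma_4^2-4\sigma_3\sigma_5,\quad I_{16}=\sigma_1\sigma_5^3,\quad I_{24}=\sigma_4\sigma_5^4,\quad I_{32}=\sigma_2\sigma_5^6,\quad I_{40}=\sigma_5^8,
\]
and that $C_{11,1},C_{19,1},C_{27,1},C_{43,1}$ equal $L_{11},L_{19},L_{27},L_{43}$ after the substitution $X_4\mapsto-(X_0+X_1+X_2+X_3)$ in Salmon's formulas of Section~2. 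This is exactly what pins down the rational constants $\tfrac{1}{2^{11}3^9},\tfrac{1}{2^{30}3^{22}},\dots$; no separate argument for them is needed. Then $I_{100}$, being the determinant of the coefficient matrix of the linear covariants just identified with $L_{11},\dots,L_{43}$, equals Salmon's degree-$100$ invariant, whose zero locus and whose square are as recalled in Section~2.

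\emph{Main obstacle.} The difficulty is computational, not conceptual. As the remark ending Section~3 warns, transvections of quaternary forms produce intermediate polynomials with very many terms, so the real bottleneck is carrying out the evaluation of the $C_{\bullet}$ and $I_{\bullet}$ even over $\mathbb Q[a_0,\dots,a_4]$; keeping the orders small, as the construction deliberately does, and reducing modulo the $S_5$-symmetry to work with $\sigma_1,\dots,\sigma_5$, is what makes this feasible, and it is the part executed by the {\tt magma} implementation. Two minor points must also be confirmed: that each intermediate $C_{\bullet}$ is not identically zero, so that the order bookkeeping of Section~5 holds with the stated values, and that the four linear covariants are generically linearly independent, so that $I_{100}\not\equiv0$; both are visible from the pentahedral computation and are consistent with the known characterisation of the vanishing locus of $I_{100}$.
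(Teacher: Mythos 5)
Your proposal is correct and follows essentially the same route as the paper: the paper's proof is precisely a computer verification (a {\tt magma} script) that the constructed expressions agree with Salmon's pentahedral formulas for the generic pentahedral cubic over the function field $\mathbb{Q}(a,b,c,d,e)$, which suffices by Zariski density of the pentahedral family's orbit and the equivariance of (co-, contra-)variants. You merely make explicit the type/weight bookkeeping and the density argument that the paper leaves implicit.
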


\begin{proof}
The following {\tt magma} script shows in approximately one second of CPU time that the algorithm 
as described above coincides with Salmon's formulas for the pentahedral family, as the last 
two comparisons result in {\tt true}.
\begin{verbatim}
r5 := PolynomialRing(Integers(),5);
ff5<a,b,c,d,e> := FunctionField(Rationals(),5);
r4<x,y,z,w> := PolynomialRing(ff5,4);

lfl := [x,y,z,w,-x-y-z-w];
col := [ff5.i : i in [1..5]];
f := a*x^3 + b*y^3 + c*z^3 + d*w^3 + e*(-x-y-z-w)^3;

sy_f := [ElementarySymmetricPolynomial(r5,i) : i in [1..5]];
sigma := [Evaluate(sf,col) : sf in sy_f];

I_8  := sigma[4]^2 - 4 *sigma[3] * sigma[5];
I_16 := sigma[1] * sigma[5]^3;
I_24 := sigma[4] * sigma[5]^4;
I_32 := sigma[2] * sigma[5]^6;
I_40 := sigma[5]^8;

L_11 := sigma[5]^2 * &+[   col[i] * lfl[i] : i in [1..5]];
L_19 := sigma[5]^4 * &+[ 1/col[i] * lfl[i] : i in [1..5]]; 
L_27 := sigma[5]^5 * &+[ col[i]^2 * lfl[i] : i in [1..5]]; 
L_43 := sigma[5]^8 * &+[ col[i]^3 * lfl[i] : i in [1..5]]; 

inv := ClebschSalmonInvariants(f);
cov := LinearCovariantsOfCubicSurface(f);

inv eq [I_8, I_16, I_24, I_32, I_40];
cov eq [L_11, L_19, L_27, L_43];
\end{verbatim}
\end{proof}

\section{Performance test}
Computing the Clebsch-Salmon invariants, following the approach above, for 100 
cubic surfaces chosen at random
with two digit integer coefficients takes about 3 seconds of CPU time.
Most of the time is used for the direct evaluation of the invariant $S$ of ternary cubics by transvection. 
Note that  
computing the contravariant $\tilde{S}$ by interpolation requires 35 evaluations of the invariant $S$ of 
a ternary cubic. 
Computing both contravariants $\tilde{S}$ and $\tilde{T}$ and the dual surface takes about 18 seconds of
CPU time for the same 100 randomly chosen surfaces.

For comparison, the computation of the pentahedral form by inspecting the singular points of 
the Hessian takes about 10 seconds per example~\cite[Sec.~5.11]{EJ1}.

All computations are done on one core of an Intel i5-2400 processor running at 3.1GHz.

\end{document}